\def\kS{$k$-space }
\def\C{\mathbb{C}}
\def\A{\boldsymbol{A}}
\def\At{\tilde{\boldsymbol{A}}}
\def\captionof#1#2{{\def\@captype{#1}#2}}
\newtheorem{thm}{Theorem}
\newtheorem{proposition}{Proposition}
\title{Variable density Compressed Sensing in MRI. \\Theoretical vs heuristic sampling strategies.}
\name{Nicolas Chauffert$^{(1,2)}$, Philippe Ciuciu$^{(1,2)}$, Pierre Weiss$^{(3,4)}$}
\address{$^{(1)}$CEA/DSV/I$^2$BM NeuroSpin center, B\^at. 145, F-91191 Gif-sur-Yvette, France\\
$^{(2)}$ INRIA Saclay Ile-de-France, Parietal team, 91893 Orsay, France.\\
$^{(3)}$ Institut des Technologies Avancées du Vivant~(CNRS UMS 3039), 31106 Toulouse, France.\\
$^{(4)}$ Institut de Mathématiques de Toulouse (CNRS UMR~5219), 31062 Toulouse Cedex 9, France.\\\normalsize{\url{nicolas.chauffert@inria.fr} , \url{philippe.ciuciu@cea.fr}, \url{pierre.armand.weiss@gmail.com}} }
\begin{document}
\ninept
\maketitle
\begin{abstract}
The structure of Magnetic Resonance Images~(MRI) and especially their compressibility in an appropriate representation basis enables the application of the compressive sensing theory, which guarantees exact image recovery from incomplete measurements. According to recent theoretical conditions on the reconstruction guarantees, the optimal strategy is to downsample the \kS using an independent drawing of the acquisition basis entries. Here, we first bring a novel answer to the synthesis problem, which amounts to deriving the optimal distribution (according to a given criterion) from which the data should be sampled. Then, given that the sparsity hypothesis is not fulfilled in the $k$-space center in MRI, we extend this approach by densely sampling this center and drawing the remaining samples from the optimal distribution. We compare this theoretical approach to heuristic strategies, and show that the proposed two-stage process drastically improves reconstruction results on anatomical MRI.
\end{abstract}
\begin{keywords}
MRI, Compressive sensing, wavelets, synthesis problem, variable density random undersampling.
\end{keywords}
\section{Introduction}
\label{sec:intro}
Decreasing scanning time is a crucial issue in MRI since it could increase patient comfort,
improve image quality by limiting the patient's movement and reducing geometric distortions and make the exam cost cheaper. A simple way to reduce acquisition time consists of acquiring less data samples by downsampling the $k$-space.
Compressed Sensing~(CS) theory \cite{Candes06,Donoho06b} gives guarantees of recovering a sparse signal from incomplete measurements in an acquisition basis. These guarantees depend on the sparsity of the image in a representation basis~(e.g., wavelet basis) and on the mutual properties of the acquisition and representation bases.

Nevertheless, the first CS theories did not provide the right answer to the synthesis problem, which consists of deriving an optimal sampling pattern. Recent results introduced in~\cite{Rauhut10,CandesP11} propose a new vision of the CS theory relying on an independent sampling of measurements. Since the proposed upper bound gives the sharpest condition on the number of measurements required to guarantee exact recovery of sparse signals, optimal sampling schemes in CS result from an independent drawing of the selected samples. In this paper, we call \textit{theoretical optimal distribution} a distribution which provides optimal reconstruction guarantees.

In this paper, we propose an answer to the synthesis problem in the MRI framework, which amounts to finding the optimal downsampling pattern of the \kS\!\!. Following~\cite{Rauhut10}, we derive the optimal distribution from which the data samples are independently drawn~(Section~\ref{sec:optimal_sampling}). Then, according to the observation that MRI images are not sparse in the low frequencies, we propose a two-stage sampling process: first, a given area of the \kS center is completely sampled in order to recover the low frequencies, and then the existing theory is applied to the remaining high frequency content of MRI images for which the sparsity assumption is more tenable~(Section~\ref{sec:spars}). In Section~\ref{sec:results}, we compare our method with the state-of-the-art~\cite{Lustig07}, and recover very close reconstruction results, while the proposed approach more likely meets the required sparsity assumption for designing CS sampling schemes. Our results show that the proposed two-stage strategy drastically improves reconstruction results.

\section{NOTATION}
\label{sec:notation}

In this paper, we consider a discrete 2D \kS composed of $n$ pixels, in which the MRI signal
is acquired. The acquisition of the complete \kS gives access to the Fourier transform of a reference image, to which we will compare our reconstruction results. We assume that the image is sparse in a representations basis~(e.g., a wavelet basis) as pointed out in~\cite{Lustig07}, and we denote by $x$ this sparse representation. Let $\Psi_1 \dots \Psi_n \in \C^n$ be the wavelet atoms of an orthogonal wavelet tranform and $\boldsymbol{\Psi} = [\Psi_1, \dots , \Psi_n]\!\in\!\C^{n \times n}$.
The MRI image then reads $\boldsymbol{\Psi}x$. Let us introduce $\boldsymbol{F}^*$ the Fourier transform and $\A_0=\boldsymbol{F}^* \boldsymbol{\Psi}$ the orthogonal transform between the {\em representation}~(the wavelet basis) and the {\em acquisition}~(the \kS\!\!) domains.
$\A_0\!\in\!\C^{n \times n}$ is then an orthogonal matrix. Finally, we will denote by
$\A\!\in\!\C^{m \times n} $ a matrix composed of $m$ lines of $\A_0$. $\A$ is called the acquisition matrix, and $m$ represents the number of Fourier coefficients actually measured.
We define the $\ell_1$ problem as the convex optimization problem that consists of finding the vector of minimal $\ell_1$-norm subject to the constraint of matching the observed data $y=\A x $:
\begin{equation}
\label{eq:l1}
 \underset{\A w=y}{\operatorname{argmin }} \|w\|_1
\end{equation}

\section{OPTIMAL SAMPLING IN AN ORTHOGONAL SYSTEM}\label{sec:optimal_sampling}
\subsection{Optimal sampling distribution}

First, we summarize a result recently introduced by Rauhut~\cite{Rauhut10}.
Let $P=(P_1, \dots, P_n)$ be a discrete probability measure. Let us define the scalar product
$\langle .,.\rangle_P$ by $\langle x,y \rangle_P=\sum_{i=1}^n x_i \bar{y_i} P_i$. 
Matrix $\tilde{\A}_0$ is defined by $(\tilde{\A}_0)_{ij}=(\A_0)_{ij}/P_i^{1/2}$, i.e.
the lines of $\A_0$ are normalized by $P^{1/2}$. Then, for all possible distribution $P$,
$\tilde{\A}_0$ has orthogonal columns with repect to $\langle .,.\rangle_P$. Following~\cite{Rauhut10},
the infinite norm of $\tilde{\A}_0$ is given by:
$K(P)=\underset{1 \leqslant i,j \leqslant n}{\operatorname{sup }} |(\tilde{\A}_0)_{ij}|$.
Rauhut's result~\cite[Theorem~4.4]{Rauhut10} links the number of required measurements $m$ to
the sparsity level $s$ of \emph{any} unknown signal in order to guarantee its exact recovery from an independent
sampling of its Fourier coefficients:
\begin{thm}{\cite[Th.~4.4]{Rauhut10}}
\label{thm:Rauhut}
Consider a sequence of $m$ i.i.d. indexes drawn from the law $P$, and denote $\A\!\in\!\C^{m \times n}$ the matrix composed of lines of $\tilde{\A}_0$ corresponding to these indexes. Assume that,
\begin{eqnarray}
\label{eq:borne1}
\frac{m}{\ln(m)} & \geqslant & C K(P)^2 s \ln^2(s) \ln(n) \\
\label{eq:borne2}
m & \geqslant & D K(P)^2 s \ln(\epsilon^{-1})
\end{eqnarray}
Then, with probability $1-\epsilon$, every $s$-sparse vector $x \in \C^n$ can be recovered from
observations $y=\A x$, by solving the $\ell_1$ minimization problem in Eq.~\eqref{eq:l1}. The values $C$ and $D$ are universal constants.
\end{thm}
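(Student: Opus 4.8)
The plan is to follow the standard two-stage route for uniform compressed-sensing guarantees: first show that a rescaling of the random matrix $\A$ satisfies a Restricted Isometry Property~(RIP) with probability at least $1-\epsilon$, and then invoke the deterministic fact that RIP implies exact $\ell_1$ recovery of every sparse vector. The key structural observation is that the $P$-normalization in the definition of $\tilde{\A}_0$ is exactly what makes this work: since the columns of $\tilde{\A}_0$ are orthonormal for $\langle\cdot,\cdot\rangle_P$, one has $\mathbb{E}_{i\sim P}\big[(\tilde{\A}_0)_{i,\cdot}^*(\tilde{\A}_0)_{i,\cdot}\big]=\mathrm{Id}_n$, hence $\mathbb{E}\big[\tfrac1m\A^*\A\big]=\mathrm{Id}_n$; and the entrywise bound on the rescaled rows is precisely $K(P)$. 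In the language of~\cite{Rauhut10}, the rows of $\tilde{\A}_0$ form a bounded orthonormal system with constant $K(P)$, and Theorem~\ref{thm:Rauhut} is the recovery guarantee for $m$-fold i.i.d.\ subsampling of such a system.

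First I would establish that, under~\eqref{eq:borne1} and~\eqref{eq:borne2}, the matrix $\tfrac1{\sqrt m}\A$ has restricted isometry constant $\delta_{2s}\le 1/3$ with probability at least $1-\epsilon$. Writing $\delta_{2s}=\sup\big|\tfrac1m\|\A x\|_2^2-\|x\|_2^2\big|$ over $2s$-sparse $x$ with $\|x\|_2\le1$, this is the supremum of the empirical process $\tfrac1m\sum_{\ell=1}^m\big(|\langle x,(\tilde{\A}_0)_{i_\ell,\cdot}\rangle|^2-\|x\|_2^2\big)$. After Rademacher symmetrization, I would bound its expectation by a Dudley entropy integral over the set of $2s$-sparse unit vectors, estimating the covering numbers by combining a volumetric bound at fine scales with a Maurey/empirical-method bound at coarse scales, and using the uniform bound $K(P)$ on $|(\tilde{\A}_0)_{ij}|$ throughout; this is where the factor $s\ln^2(s)\ln(n)$ arises, and~\eqref{eq:borne1} makes $\mathbb{E}\,\delta_{2s}\le1/6$. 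A concentration inequality for suprema of nonnegative empirical processes (Talagrand/Bernstein type) then upgrades this to a tail bound with exponent of order $m/(K(P)^2 s)$, and~\eqref{eq:borne2} drives this probability below $\epsilon$.

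Second, on the event $\{\delta_{2s}(\tfrac1{\sqrt m}\A)\le1/3\}$ I would apply the classical deterministic lemma (Cand\`es; Foucart): if $B$ satisfies $\delta_{2s}(B)<\sqrt2-1$, then every $s$-sparse vector $x$ is the unique solution of $\min\|w\|_1$ subject to $Bw=Bx$. Since multiplying $\A$ by the scalar $\tfrac1{\sqrt m}$ leaves the feasible set $\{w:\A w=y\}$ of~\eqref{eq:l1} unchanged, this yields exact recovery of every $s$-sparse $x$ from $y=\A x$ on that event, proving the theorem.

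The main obstacle is the chaining estimate for $\mathbb{E}\,\delta_{2s}$: obtaining the sharp dependence, and in particular the $\ln^2(s)\ln(n)$ rather than a larger power of the logarithms, requires splitting the entropy integral at the right scale, playing the two covering-number bounds against each other, and bootstrapping the empirical process against itself in the style of Rudelson--Vershynin. By comparison, the concentration step is a black-box application of Talagrand's inequality and the passage from RIP to $\ell_1$ recovery is a short convex-analysis argument via the cone/null-space condition.
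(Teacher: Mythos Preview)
Your proposal is a faithful sketch of the standard proof of this result (bounded orthonormal system $\Rightarrow$ RIP via symmetrization, Dudley chaining and Talagrand concentration, then RIP $\Rightarrow$ uniform $\ell_1$ recovery), and it correctly identifies the role of the $P$-normalization and of $K(P)$. There is nothing to compare against in this paper, however: the theorem is not proved here but merely quoted from~\cite[Th.~4.4]{Rauhut10}, and the paper only uses its statement (specifically the dependence of the bounds~\eqref{eq:borne1}--\eqref{eq:borne2} on $K(P)$) to motivate the choice of the sampling distribution in Proposition~1. So your outline is essentially the proof given in the cited reference, not an alternative to anything the present paper does.
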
 

\noindent Let us denote $a_i^*$ the $i$-th line of $\A_0$. We show the following result:

\begin{proposition}
Since $K(P)= \underset{1 \leqslant i,j \leqslant n}{\operatorname{sup}}
|(\tilde{\A}_0)_{ij}| =\underset{1 \leqslant i \leqslant n}{\operatorname{sup }}
|\frac{\|a_i\|_\infty}{P_i^{1/2}}|$,
\begin{enumerate}[(i)]
\item the optimal distribution $\pi$ that minimizes the upper bound in~\eqref{eq:borne1}--\eqref{eq:borne2} is:
$\pi_i=\|a_i\|_\infty^2/L$ where $L=\sum_{i=1}^n \|a_i\|_\infty^2$. 
\item $K(\pi)^2=L$.\\
\end{enumerate}
\end{proposition}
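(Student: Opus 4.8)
The plan is to reduce both conditions of Theorem~\ref{thm:Rauhut} to the minimization of a single scalar quantity. First, the claimed identity $K(P)=\sup_{1\leqslant i,j\leqslant n}|(\tilde{\A}_0)_{ij}|=\sup_{1\leqslant i\leqslant n}\|a_i\|_\infty/P_i^{1/2}$ is obtained by splitting the double supremum: for fixed $i$, $\sup_j|(\A_0)_{ij}|/P_i^{1/2}=\|a_i\|_\infty/P_i^{1/2}$, and then one takes the supremum over $i$ (with the convention that the value is $+\infty$ whenever some $P_i=0$; this never matters at an optimum since each row of the orthogonal matrix $\A_0$ is nonzero, indeed $\|a_i\|_\infty\geqslant\|a_i\|_2/\sqrt n=1/\sqrt n>0$). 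The key structural observation is then that the right-hand sides of~\eqref{eq:borne1} and~\eqref{eq:borne2} are, respectively, $K(P)^2$ times $Cs\ln^2(s)\ln(n)$ and $K(P)^2$ times $Ds\ln(\epsilon^{-1})$, where the factors multiplying $K(P)^2$ do not depend on $P$. Hence the two requirements are not in competition, and minimizing the upper bound amounts to minimizing the single scalar $K(P)^2=\sup_{1\leqslant i\leqslant n}\|a_i\|_\infty^2/P_i$ over the probability simplex $\{P:\ P_i\geqslant 0,\ \sum_i P_i=1\}$.

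Next I would solve this finite-dimensional optimization problem. Put $c_i=\|a_i\|_\infty^2>0$ and $L=\sum_{i=1}^n c_i>0$. For any probability vector $P$ with strictly positive coordinates (if some coordinate vanishes then $K(P)^2=+\infty\geqslant L$ and there is nothing to prove), the maximum of the numbers $c_i/P_i$ is at least their $P$-weighted average, the weights $P_i$ being nonnegative and summing to one, so
\[
\max_{1\leqslant i\leqslant n}\frac{c_i}{P_i}\ \geqslant\ \sum_{i=1}^n P_i\,\frac{c_i}{P_i}\ =\ \sum_{i=1}^n c_i\ =\ L,
\]
which yields $K(P)^2\geqslant L$ for every distribution $P$. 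It then remains to check that equality is attained at $P=\pi$ with $\pi_i=c_i/L=\|a_i\|_\infty^2/L$: indeed $c_i/\pi_i=L$ for all $i$, so $K(\pi)^2=\max_i c_i/\pi_i=L$, which is statement~(ii). Since no distribution makes $K(\cdot)^2$ smaller than $L$, this same computation shows that $\pi$ minimizes the bound, i.e.\ statement~(i). One may moreover observe that equality in the displayed inequality forces all ratios $c_i/P_i$ to coincide, whence $P\propto(c_i)_i$, so $\pi$ is in fact the unique minimizer.

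I do not anticipate a genuine difficulty: the argument is essentially the observation that the two sampling conditions depend on $P$ only through $K(P)$, combined with the one-line averaging inequality that produces the sharp constant $L$. The only minor points to treat with care are justifying $\|a_i\|_\infty>0$ for all $i$ (so that $\pi$ is well defined and the value $K(P)=+\infty$ can be excluded from the outset) and the rearrangement of the supremum used to obtain the working expression for $K(P)$.
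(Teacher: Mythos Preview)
Your proof is correct. The paper's own argument is slightly different in its key step: rather than the averaging inequality $\max_i c_i/P_i\geqslant\sum_i P_i\,(c_i/P_i)=L$, it observes that for any probability vector $q\neq\pi$ the constraint $\sum_k q_k=\sum_k\pi_k=1$ forces some index $j$ with $q_j<\pi_j$, and then bounds $K(q)\geqslant\|a_j\|_\infty/q_j^{1/2}>\|a_j\|_\infty/\pi_j^{1/2}=\sqrt{L}=K(\pi)$. Both routes are one-line elementary arguments and both deliver uniqueness of the minimizer; your averaging device is the standard trick for min--max problems over the simplex and generalizes cleanly, while the paper's pigeonhole-style comparison is more direct for this specific statement. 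Your additional remarks (that both bounds depend on $P$ only through $K(P)$, and that $\|a_i\|_\infty>0$ because $\A_0$ is orthogonal) are more explicit than what the paper spells out, but entirely in line with it.
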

\begin{proof}
\begin{enumerate}[(i)]
\item[] 
\item Taking $P=\pi$, we get $K(\pi)=\sqrt{L}$. Now assume that $q\neq \pi$, since $\sum_{k=1}^n q_k = \sum_{k=1}^n \pi_k=1$, $\exists j \in [1,n]$ such that $q_j<\pi_j$. Then $K(q)\geqslant \|a_j\|_\infty/q_j^{1/2} > \|a_j\|_\infty/\pi_j^{1/2} = \sqrt{L} = K(\pi)$. So, $\pi$ is the distribution that minimizes $K(P)$.
\item is a consequence of $\pi$'s definition.
\end{enumerate}
\end{proof}
\noindent In the next part, we assess the upper bound in inequality~\eqref{eq:borne1}.

\subsection{Discussion on the upper bound}

In \cite{Puy11,CandesP11}, or even in \cite[Th.~4.2]{Rauhut10}, a $O(s \log (n))$ upper bound has been derived.
Nevertheless, these results only consider the probability to recover \emph{a given} sparse signal. The result introduced in Theorem~\ref{thm:Rauhut} gives a \emph{uniform} result and thus is more general since it enables to apply the CS theory to \emph{all} sparse signals and for our concern, to all MRI images. Moreover, it is more general than the one proposed in~\cite{Puy11}, since no assumption on the sign of the sparse signal entries is needed. Roughly speaking, the upper bound in Eq.~\eqref{eq:borne1} shows that the number of measurements needed to perfectly recover an $s$-sparse signal is $O(s \log^4(n))$ (since $m\leqslant n$ and $s\leqslant n$). According to~\cite{Rauhut10}, this bound is the best known result for uniform recovery.

Nevertheless, this bound is not usable in practice to determine the number of measurements.
Indeed, for a 2D $256 \times 256$ image, $\log^4(n)=15128$, $K^2(\pi)=L$ only depends on the choice of the wavelet representation and $L\approx 10$: In our experiments, we used Symmlets with 10 vanishing moments and got $L=8.34$. Rauhut~\cite{Rauhut10} suggests that $C \gg 1$, which actually makes the upper bound unusable in Eq.~\eqref{eq:borne1}.

Recent results~\cite{Juditsky11b} give $O(s^2)$ bounds for the number of measurements needed to guarantee exact reconstruction. Nevertheless, the constants are lower and guarantee the reconstruction of very sparse signals. Unfortunately, the $O(s^2)$ bound called quadratic bottleneck is a strong limit for applicability in large scale scenarii. 

\subsection{Rejection of samples}

An important issue of this theoretical approach is that \kS positions that appear more likely according to $\pi$ are drawn more than once. To select a given position at most once, we introduce an intuitive alternative to the solution proposed in~\cite{Puy11}, which consists of rejecting samples associated with $k$-space positions that have already been visited.
 Let us show that this strategy is an improvement over the one proposed in Theorem~\ref{thm:Rauhut}.\\
Let $\A\!\in\!\C^{m \times n}$ denote the matrix composed of $m$ lines of $\At_0$, corresponding to $m$ independent drawings. $\A$ has $m_1$ different lines ($m_1 \leqslant m$), and $\A_1\!\in\!\C^{m_1 \times n}$ denotes the corresponding matrix. Let $\A_2\!\in\!\C^{m \times n}$ denote the matrix obtained after $m_2$ additional independent drawings, where $m_2$ is the smallest integer such that the actual number of different samples matches $m$ exactly. Then:
\begin{proposition}
\label{prop:rejet}
If $x$ is the unique solution of the following $\ell_1$ problem: $\underset{\A w=\A x}{\operatorname{argmin }} \|w\|_1$, it is also the unique solution of $\underset{\A_2 w=\A_2 x}{\operatorname{argmin }} \|w\|_1$
\end{proposition}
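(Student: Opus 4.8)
The plan is to argue that enlarging the row set of the sampling matrix cannot destroy uniqueness of the $\ell_1$ minimizer. The key observation is purely linear-algebraic and independent of any probabilistic content: $\A_2$ is obtained from $\A$ (equivalently, from $\A_1$, after removing duplicate rows, which changes neither the row space nor the feasible set) by appending $m_2$ extra rows of $\At_0$. Hence the null space satisfies $\ker \A_2 \subseteq \ker \A$, and the affine feasible set shrinks: $\{w : \A_2 w = \A_2 x\} \subseteq \{w : \A w = \A x\}$. First I would make this inclusion precise, noting that $\A w = \A x$ is equivalent to $\A_1 w = \A_1 x$ since the rows dropped when passing from $\A$ to $\A_1$ are exact repetitions and impose no new constraint, and that every row of $\A_1$ is also a row of $\A_2$ (it was drawn at some point in the combined sequence of $m+m_2$ draws), so any $w$ feasible for $\A_2$ is feasible for $\A$.

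Next I would invoke the hypothesis: $x$ is the \emph{unique} minimizer of $\|w\|_1$ over the larger set $\{w : \A w = \A x\}$. In particular $x$ lies in the smaller set $\{w : \A_2 w = \A_2 x\}$ (trivially, since $\A_2 x = \A_2 x$), and for any other $w'$ in this smaller set we have $w' \neq x$ and $w'$ feasible for $\A$, so by uniqueness $\|w'\|_1 > \|x\|_1$. This shows $x$ is the unique solution of the $\ell_1$ problem constrained by $\A_2$, which is exactly the claim. I would present this as a short two- or three-line argument, perhaps phrased as: restriction of the optimization to a subset of the original feasible set containing the optimal point preserves optimality and uniqueness.

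I do not anticipate a genuine obstacle here — the statement is essentially a monotonicity remark — but the one point that deserves care is the bookkeeping around duplicated rows, i.e. making explicit that $\{w : \A w = \A x\} = \{w : \A_1 w = \A_1 x\}$ and that $\A_1$'s rows are among $\A_2$'s rows. It is worth stating clearly that rejection sampling produces $\A_2$ whose distinct rows form a superset of the distinct rows of $\A$; without that sentence the inclusion of feasible sets is not quite transparent. Once that is nailed down, the conclusion follows immediately and no estimate, constant, or probabilistic argument is needed. If desired, one could add a remark that the same reasoning shows rejection sampling can only improve (never worsen) the recovery guarantees of Theorem~\ref{thm:Rauhut}, which is the practical message the authors want to convey.
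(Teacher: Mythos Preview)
Your proposal is correct and follows essentially the same route as the paper: the paper takes an arbitrary $x^*$ feasible for $\A_2$ with $\|x^*\|_1 \le \|x\|_1$, notes that every row of $\A$ appears among the rows of $\A_2$ so that $x^*$ is also feasible for $\A$, and then invokes uniqueness to force $x^*=x$. Your version is slightly more explicit about the role of $\A_1$ and the duplicate rows, but the underlying idea (inclusion of feasible sets preserves the unique optimum) is identical.
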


\begin{proof}
Assume that $x^*$ fulfills $\A_2 x^*=\A_2 x$ and $\|x^*\|_1 \leqslant \|x\|_1$, then since all lines of $\A$ are lines of $\A_2$, we get $\A x^*=\A x$. Because $x$ is the unique solution of $\underset{\A w=\A x}{\operatorname{argmin }} \|w\|_1$ and $\|x^*\|_1 \leqslant \|x\|_1$, then $x^*=x$ and $x$ is also the unique solution of: $\underset{\A_2 w=\A_2 x}{\operatorname{argmin }} \|w\|_1$.
\end{proof}
In Theorem~\ref{thm:Rauhut}, $m$ is the number of drawings. Prop.~\ref{prop:rejet} proves that the result still holds if $m$ is the number of \emph{different samples} drawn according to law $P$. 

\subsection{Preliminary results}\label{subsec:prelim_results}
The proposed sampling strategy was tested on the Fourier tranform of a refe\-rence image~(Fig.~\ref{fig:prelim}(a)) using several sampling patterns. We compare the above mentioned independent drawing from $\pi$ distribution shown in Fig.~\ref{fig:prelim}(b)) to polynomial distributions $P(x,y)=(1-(\sqrt{2}/n)\sqrt{k_x^2+k_y^2})^p$
for variable power of decay $p=1\!:\!6$ and $-n/2<k_x,k_y \leqslant n/2 $~\cite{Lustig07}. 
In our experiments, the \kS is downsampled at rate 5 meaning that only 20\% of the Fourier coeffients are measured; see Fig.~\ref{fig:prelim}(c)-(d).

\begin{figure}[!h]
\begin{center}
\begin{tabular}{@{}cc}
{\small (a)} & {\small (b)} \\
\includegraphics[height=0.35\linewidth,keepaspectratio=true]{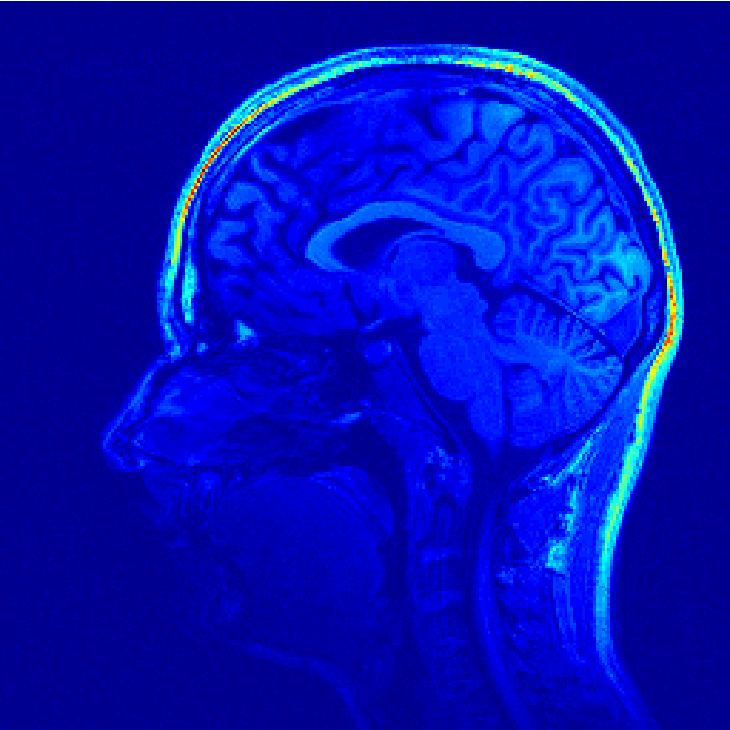}&
\includegraphics[height=0.35\linewidth,keepaspectratio=true]{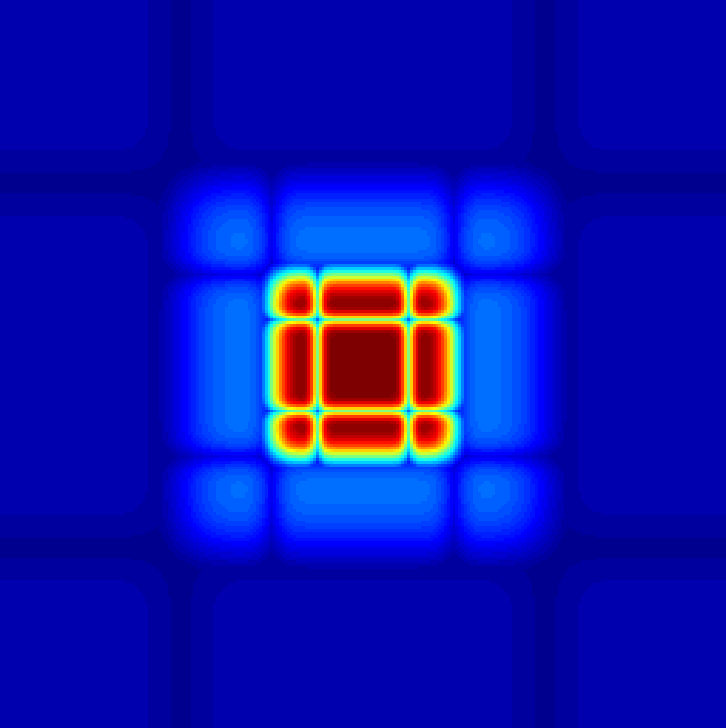}\hspace{2mm}
\includegraphics[height=0.35\linewidth,keepaspectratio=true]{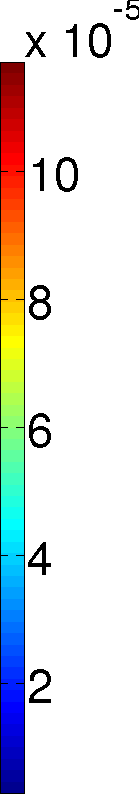} \\
{\small (c)} & {\small (d)}\\
\includegraphics[height=0.35\linewidth,keepaspectratio=true]{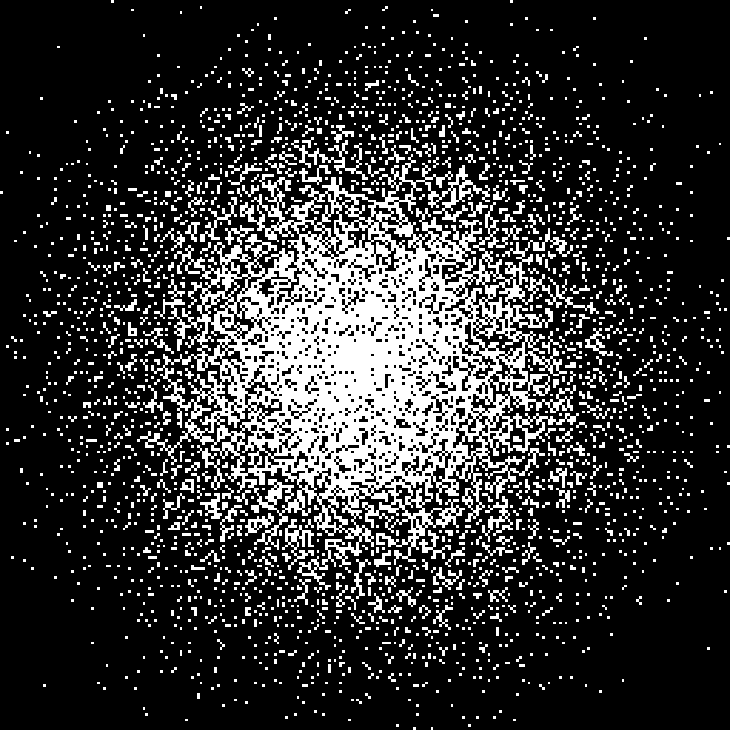}&
\includegraphics[height=0.35\linewidth,keepaspectratio=true]{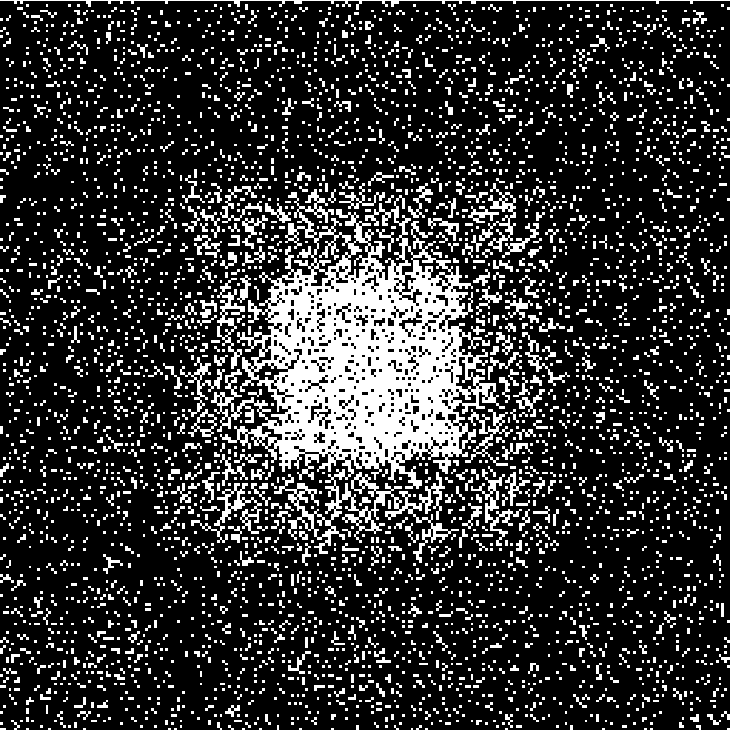}
\end{tabular}\vspace*{-.65cm}
\end{center}
\caption{\label{fig:prelim} \textbf{Example of \kS sampling schemes: selected samples appear in white color.} (a):
Reference $256\times 256$ used in our experiments. (b): Optimal distribution $\pi$. (c):
Sampling pattern based on an independent drawing from a 4th order polynomial density~\cite{Lustig07}. (d): Sampling pattern based on an independent drawing from $\pi$.}
\end{figure}

Since our sampling schemes involve randomness, we performed a Monte-Carlo study and
generated 10 sampling schemes from wich we perform reconstruction by solving the $\ell_1$ minimization problem.
For comparison purposes, we computed method-specific average va\-lue of peak~Signal-to-Noise Ratio~(PSNR) over the 10 reconstruction results as well as the corresponding standard deviations. The results are summarized in Tab.~\ref{tab:prelim} where it is shown that an independent drawing from $\pi$ gives worse results than those obtained with the empirical polynomial distribution with a power $p=4\!:\!6$. Optimality of exponents $p=5,6$ are in agreement with previous work~\cite{Knoll10}.
This poor reconstruction performance can be justified by the fact that MRI images are not
really sparse in the wavelet domain since a lot of low-frequency wavelet coefficients are non-zero.
This lack of sparsity justifies the weakness of this theoretical approach in comparison with polynomial or more generally variable density samplings.

The non-sparsity of the low frequency image content in the wavelet domain means that a
lot of image information is contained around the $k$-space center. This explains why high order polynomial distributions provide better reconstruction results.
In what follows, we propose a novel two-step sampling process to overcome this limitation.


\vspace*{-.5cm}
\begin{table}[!h]
\caption{\label{tab:prelim} Comparison of the reconstruction results in terms of PSNR for various \kS downsampling methods. {\bf Bold font} indicate the best performance with respect to~(wrt) the PSNR and its Std. Dev.}
\vspace*{-0.2cm}
\begin{center}
\begin{tabular}{|c|c||c|c|}
\hline
\multicolumn{2}{|c||}{Sampling density} & Mean PSNR (dB)& Std. dev.\\ 
\hline
\hline
\multirow{6}{1cm}{\begin{sideways}\parbox{15mm}{Polynomial decay.\\ Exponent:}\end{sideways}}
& 1 & 23.55 & 1.40 \\ \cline{2-4}
& 2 & 29.40 & 2.48 \\ \cline{2-4}
& 3 & 32.00 & 3.01 \\ \cline{2-4}
& 4 & 35.52 & 0.57 \\ \cline{2-4}
& 5 & {\bf 36.09} & 0.14 \\ \cline{2-4}
& 6 & {\bf 35.94} & 0.04 \\ \hline \hline
\multicolumn{2}{|c||}{$\pi$} & 33.38 & 2.26 \\  \hline
\end{tabular}
\end{center}
\vspace*{-0.1cm}
\end{table}

\section{IMPROVING IMAGE SPARSITY}
 \label{sec:spars}

Most CS theories are based on sparsity of the signal of interest in a transform basis.
Fig.~\ref{fig:wav} shows that low frequency wavelet coefficients contain a lot of information and does not fulfill the sparsity hypothesis.
The following method tends to decompose the wavelet representation in two parts, one dedicated to low frequencies and the other to high frequencies in which the MRI image is more sparse.
Since low frequency wavelets impact the $k$-space center, their recovery is performed by fully sampling this center. High frequencies are then reconstructed using CS theory.

\begin{figure}[!h]
\begin{center}
\includegraphics[width=0.35\linewidth]{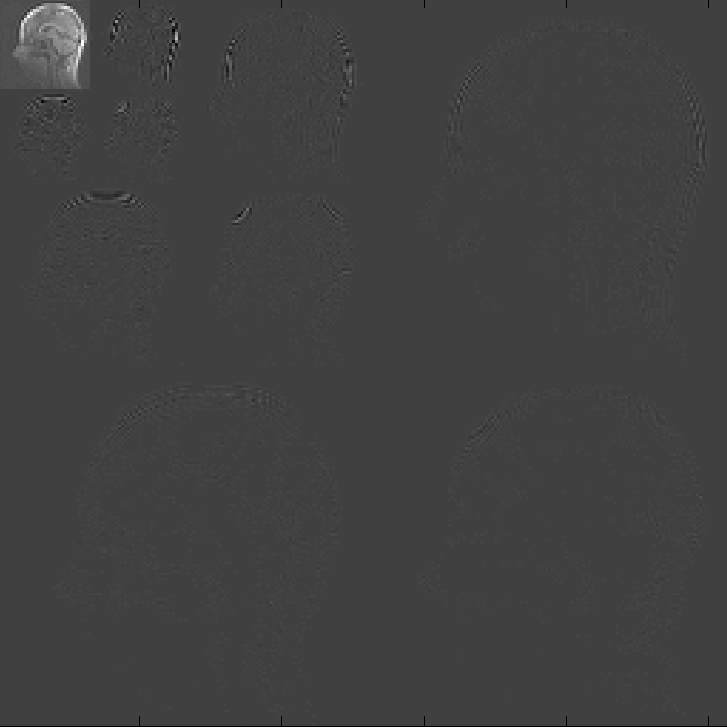}
\end{center}\vspace*{-.65cm}
\caption{\label{fig:wav} Representation of the typical MRI image shown in Fig.~\ref{fig:prelim}(a) in a wavelet
basis (Symmlets with 10 vanishing moments).}
\end{figure}

\subsection{Acquiring the \kS\!\! center: a sparsifying technique.}

Let $n_1$ be the number of low frequency wavelets. Without loss of generality, assume that $\Psi_1 , \dots, \Psi_{n_1}$
are the low frequency wavelet atoms. Let $\displaystyle \Omega=\bigcup_{1\leqslant i \leqslant n_1} \operatorname{supp } (\boldsymbol{F}^*\Psi_i)$. 
By definition of $\Omega$, let us introduce vector $x_\Omega=\boldsymbol{\Psi}^* \boldsymbol{F} y_\Omega$ where $y:=\A_0 x$  and for $ 1\leqslant i \leqslant n$, ${y_{\Omega}}_i=y_i 1_\Omega(i)$. 
Then $x_i={x_\Omega}_i$ for $ 1\leqslant i \leqslant n_1$ and vector $x_{\Omega^\perp}=x-x_\Omega$ is sparse since it contains no low frequency wavelet coefficient.

\subsection{A two-stage reconstruction}

The signal of interest $x_{\Omega^\perp}$ is now more sparse. We adopt the same strategy as in Section~\ref{sec:optimal_sampling}: we draw samples according to $\pi$ and perform rejection if the sample drawn is located within $\Omega$. Then, we recover the signal $x_{\Omega^\perp}$ by solving $\small\underset{\A w=y-\A x_\Omega}{\operatorname{argmin }} \|w\|_1\normalsize$. We notice that:
\begin{enumerate}[(i)]
\item Since we reject samples already drawn, we can sample from the law $\pi^\star$ defined by $\pi^\star_i=\|a_i\|_\infty^2/L^\star$ if $i \not\in \Omega$, $0$ otherwise, with $L^\star=\sum_{i\notin \Omega}\|a_i\|_\infty^2$. The profile of $\pi^\star$ is shown in Fig.~\ref{fig:K}(b). Amongst the remaining frequencies, the more likely \kS positions are the lower frequencies. High frequencies remains unlikely and will be rarely visited by these schemes.

\item For the particular case of Shannon wavelets, $\small \Omega^\perp= \bigcup_{n_1<j\leqslant n}$\\ ${\rm supp} (\boldsymbol{F}^* \Psi_i)\normalsize$. Then, matrix $\A_{\Omega^\perp}$ (composed of lines of $\At_0$ corresponding to frequencies included in $\Omega^\perp$) has orthogonal co\-lumns. Then, Theorem.~\ref{thm:Rauhut} can be applied with the optimal distribution $\pi^\star$ and $\Omega=\sqrt{L^\star}$. Bounds~\eqref{eq:borne1}--\eqref{eq:borne2} are thus improved since $L^\star< L$.
\end{enumerate}



\vspace*{-.4cm}
\begin{figure}[htp]
\begin{center}
\begin{tabular}{cc}
{\small (a)}& {\small (b)}\\
\includegraphics[height=0.35\linewidth]{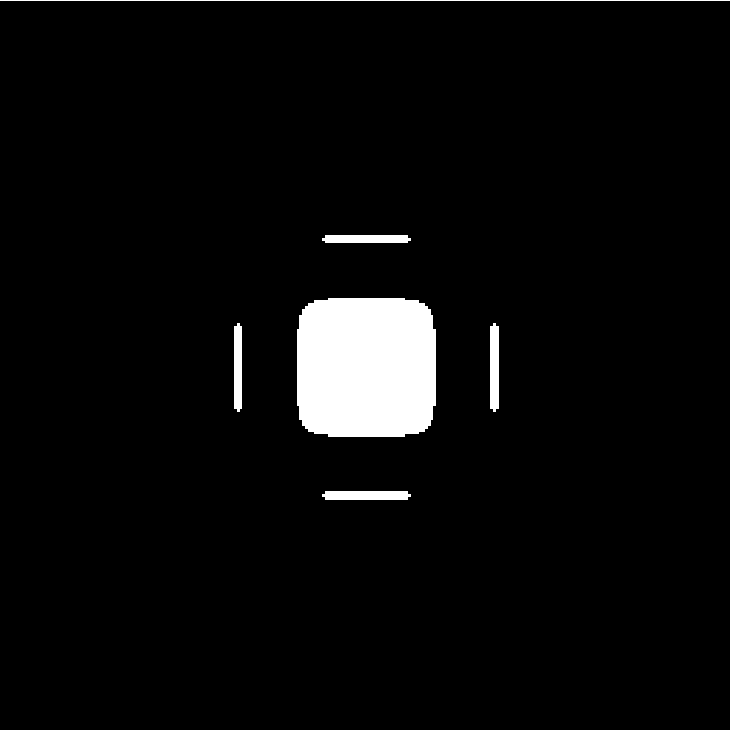}&
\includegraphics[height=0.35\linewidth]{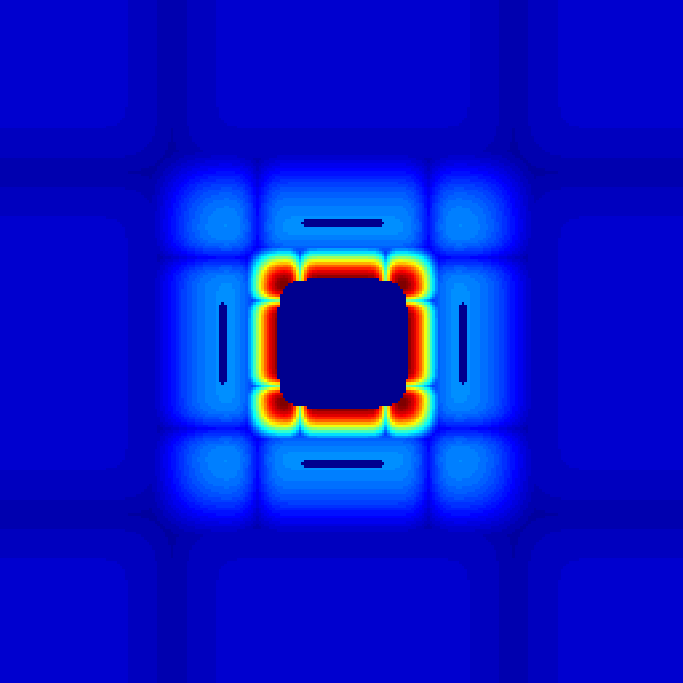}\hspace*{2mm}
\includegraphics[height=0.35\linewidth]{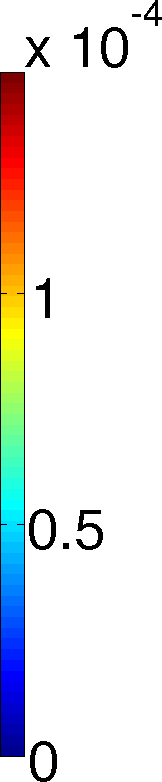}\\
\end{tabular}\vspace*{-.65cm}
\end{center}
\caption{\label{fig:K} Representation of $\Omega$ for a Symmlet-10 transform~(a) and distribution $\pi^\star$ (b).}
\end{figure}

\section{RESULTS}\label{sec:results}

Here, we compare several classical sampling schemes used in MRI with our two-stage approach
either combined with high frequency sampling from $\pi^\star$-distribution or from
polynomial densities, as illustrated in Fig.~\ref{fig:sampling_schemes}. For each scheme, the number of samples acquired represents only 20\% of the full $k$-space.

\begin{figure}[!h]
\begin{center}
\begin{tabular}{@{}ccc}
{\small (a)}&{\small (b)}&{\small (c)} \\
\includegraphics[height=0.3 \linewidth]{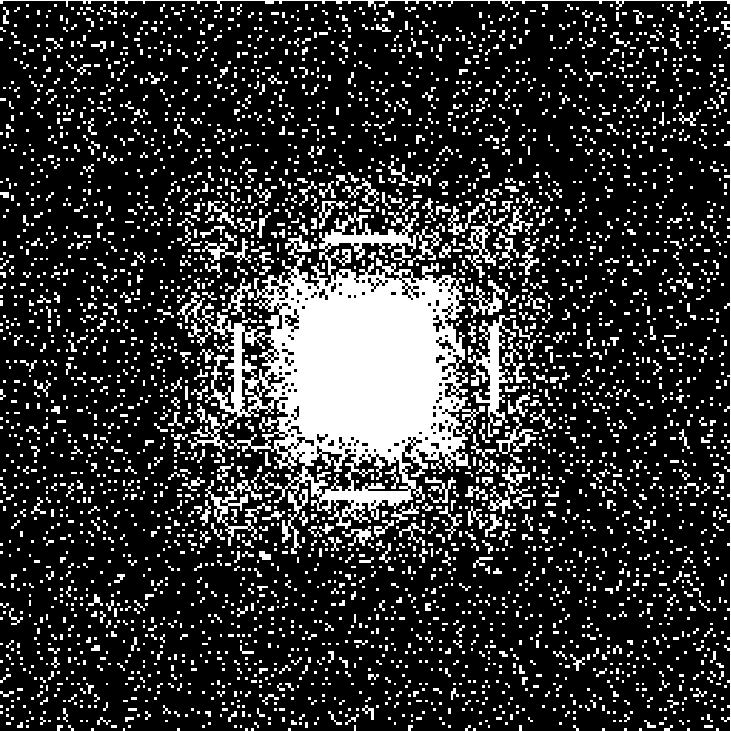}&
\includegraphics[height=0.3 \linewidth]{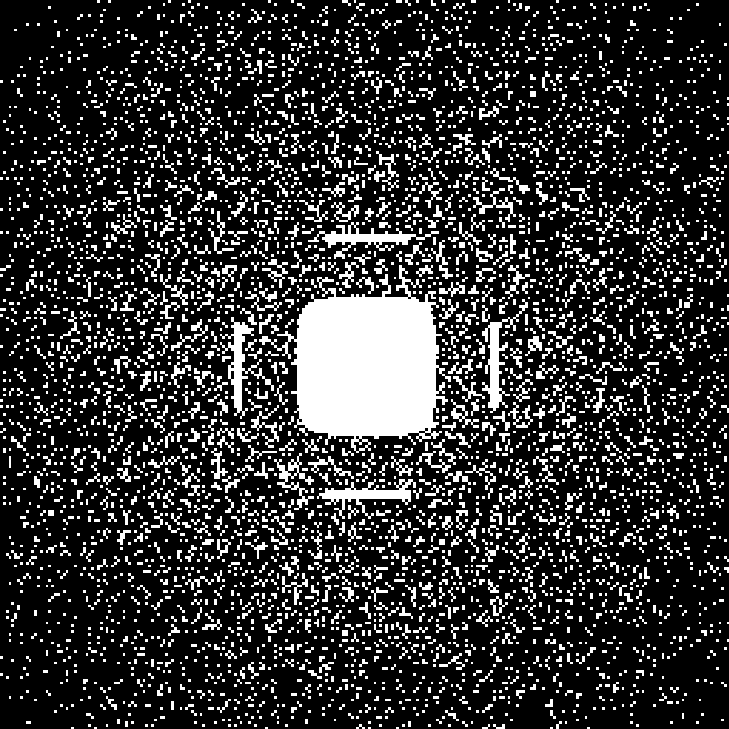}&
\includegraphics[height=0.3 \linewidth]{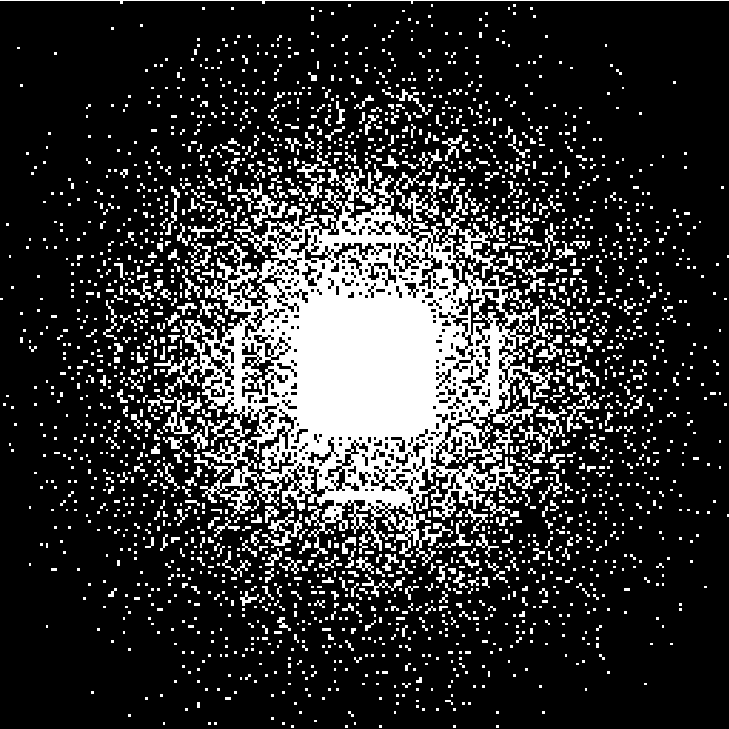}\\
{\small (d)}&{\small (e)}&{\small (f)} \\
\includegraphics[height=0.3 \linewidth]{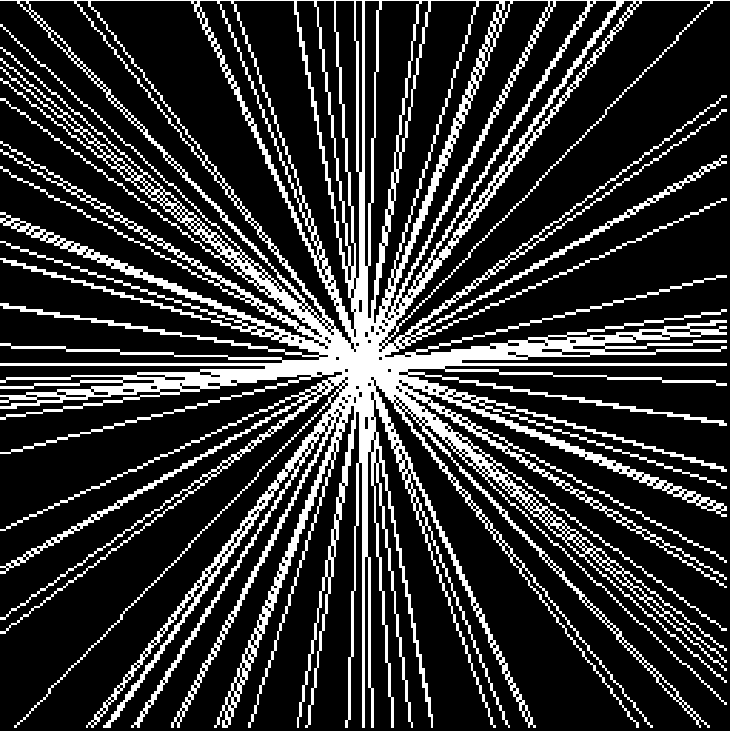}&
\includegraphics[height=0.3 \linewidth]{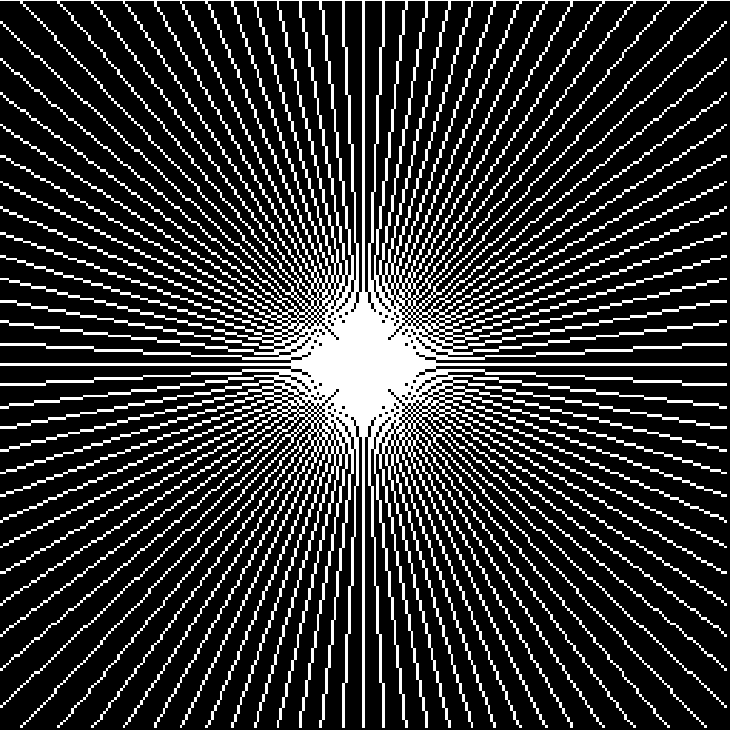}&
\includegraphics[height=0.3 \linewidth]{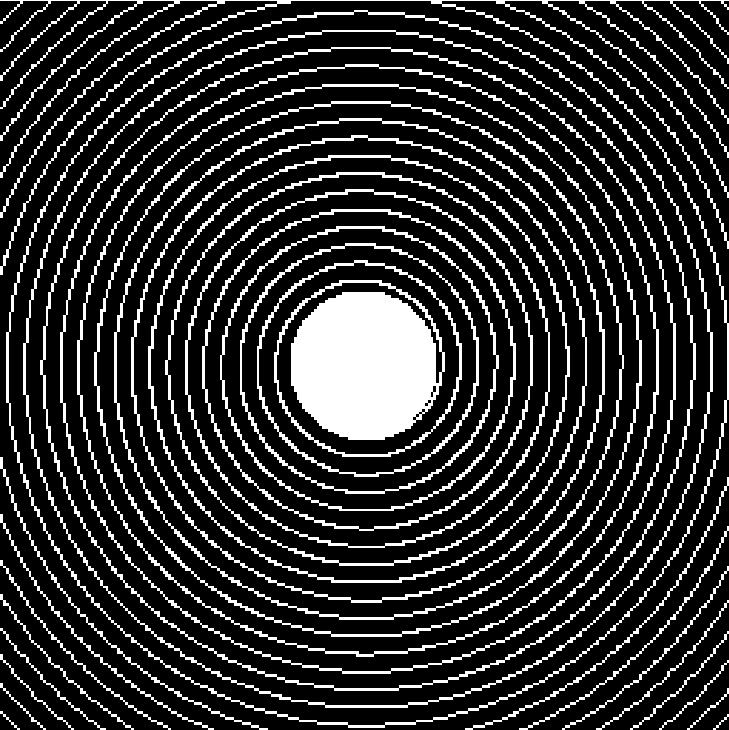}\\
\end{tabular}\vspace*{-.6cm}
\end{center}
\caption{\label{fig:sampling_schemes} \textbf{Various \kS sampling schemes.} (a)-(c): {\em Two-stage sampling schemes}. High frequencies sampled according to $\pi^\star$~(a) and to polynomial densities \cite{Lustig07} with $p=1$~(b) and $p=4$~(c).
(d)-(f): {\em Classical sampling schemes}. (d): Radial sampling with independent drawing of angles between lines. (e): Uniform radial sampling. (f): Spiral sampling, with a complete acquisition of low frequencies.} 
\end{figure}

\begin{table}[!h]
\caption{\label{tab:results} Reconstruction results in terms of mean PSNR~(and Std. dev. for random schemes) for the different patterns introduced in Fig.~\ref{fig:sampling_schemes}.
{\bf Bold font} indicate the best performance wrt the PSNR.}\vspace*{-.1cm}
\begin{center}
\begin{tabular}{|c|c||c|c|}
\hline
\multicolumn{2}{|c||}{Sampling density} & Mean PSNR (dB)& Std. dev.\\ 
\hline
\hline
\multirow{7}{0.7cm}{\begin{sideways}\parbox{15mm}{Two-stage strategies:}\end{sideways}}
& $\pi^\star$ & 35.87 & 0.08 \\ \cline{2-4}
& Polynomial (1) & 35.03 & 0.07\\ \cline{2-4}
& Polynomial (2) & 35.94 & 0.08\\ \cline{2-4}
& Polynomial (3) & \textbf{36.30} & 0.08\\ \cline{2-4}
& Polynomial (4) & \textbf{36.32} & 0.15\\ \cline{2-4}
& Polynomial (5) & \textbf{36.24} & 0.27\\ \cline{2-4}
& Polynomial (6) & 35.86 & 0.05\\ \cline{2-4}\hline \hline
\multicolumn{2}{|c||}{Random radial} & 31.97 & 0.52\\  \hline
\multicolumn{2}{|c||}{Radial} & 34.22 & \\  \hline
\multicolumn{2}{|c||}{Spiral} & 31.15 & \\  \hline
\end{tabular}
\end{center}
\vspace*{-0.7cm}
\end{table}

As shown in Tab.~\ref{tab:results}, sampling the whole \kS center drastically improves the reconstruction performance in comparison with Tab.~\ref{tab:prelim} irrespective of the inital approach. Moreover, in contrast to what we observed in Section~\ref{subsec:prelim_results} with $\pi$, our two-stage approach based on $\pi^\star$ achieves very close reconstruction results~(while less accurate by 0.45dB) to those based on a similar two-step procedure relying on high order polynomial densities~\cite{Lustig07}. As expected, the gain $g_p$ in dB for low order polynomial densities is larger~($g_p\in(4.3, 11.48)$~dB) for $p=1:3$) since increasing the exponent in such densities makes the sampling more dense around the $k$-space center. For the highest order density~($p=6$), we even observe a loss of PSNR indicating that our approach fails in this context. On the other hand, the gain we obtained for $\pi^\star$ is more striking since $g_{\pi}=2.49$~dB. Also, the standard deviation measured for all random schemes strongly decreases using our two-step approach. Besides, the two-stage sampling schemes perform better than spiral and radial patterns irrespective of the strategy with respect to angles~(regular or random). Nevertheless, continuity is required for practical implementation of MRI sequences and we are currently investigating the design of optimal continuous trajectories. Our aim is to derive continuous sampling patterns from conciliating optimal drawing and continuity using Markov chains.

The reasons for which the reconstruction results based on the optimal distribution $\pi^\star$ are less accurate than those based on polynomial density sampling are still unclear. Two main hypotheses formulate as follows: i.)~even after the first step (subtracting low frequencies), the signal would not be sparse enough preventing us from meeting the conditions of Theorem.~\ref{thm:Rauhut}. ii.) the bounds given in Theorem.~\ref{thm:Rauhut} might not be optimal and minimizing these bounds would not actually give the optimal sampling distribution. In particular, real images have a level of sparsity that increases among wavelet subbands, and this property should be taken into account in order to derive theoretical reconstruction results.

Although this theoretical approach does not seem optimal in terms of reconstruction performance, it encourages the use of compressed sensing for MRI. Indeed, this theory tends to sample the low frequencies more densely than the high frequencies, from considerations on the transform basis. The heuristics which have the best practical results~\cite{Lustig07} propose similar sampling density profiles. Theories~\cite{Rauhut10,CandesP11} suit with the fact that MRI images mainly contain low frequency information, though theoretical framework does not take this observation into consideration.
    

\section{CONCLUSION AND FUTURE WORK}
We proposed a two-stage method to design $k$-space sampling schemes. First, the signal is sparsified by acquiring the whole center of the \kS in order to recover the wavelet low frequency coefficients. Second, an independent random sampling of high frequencies is performed according to an optimal distribution for a given criterion so as to recover the remaining wavelet coefficients. Our results are comparable to the state-of-the-art. Also, we have shown that sampling the whole low frequencies drastically improves reconstruction quality. 
Our method is general enough to be used for any acquisition system measuring a sparse signal, which does not perfectly fulfill the sparsity hypothesis. In MRI, it seems that sparsity in the wavelet domain depends on the subbands. An interesting outlook of this work would be to include this remark in the theoretical framework.


%
%


{\footnotesize
\bibliographystyle{IEEEbib}
\bibliography{bibenabr,revuedef,revueabr,isbi}
}
\end{document}